\tikzset{
  box/.style={
    regular polygon,
    regular polygon sides=6,
    minimum size=10mm,
    inner sep=0mm,
    outer sep=0mm,
    rotate=0,
  draw
  }
}
\newcommand{\vl}{\;\vert\;}
\def\adots{\mathinner{\mkern1mu\raise\p@
\vbox{\kern7\p@\hbox{.}}\mkern2mu
\raise4\p@\hbox{.}\mkern2mu\raise7\p@\hbox{.}\mkern1mu}}
\newcommand{\mz}{multiplicative Zagreb}
\newcommand{\E}[1]{E(#1)} % Edge Set
\newcommand{\V}[1]{V(#1)} % Vertex Set
\newcommand{\dg}[1]{d(#1)} % vertex degree
\newcommand{\z}[1]{\mathcal M_1(#1)} % first mz index
\newcommand{\zz}[1]{\mathcal M_2(#1)} % second mz index
\newcommand{\wang}[2]{\mathcal W_1^{#1}(#2)} % wang's generalization
\newcommand{\nz}[1]{\mathcal M_1^*(#1)} % new first mz index
\newcommand{\hz}[1]{\mathcal{H}_1(#1)} % first mhz index
\newcommand{\hzz}[1]{\mathcal{H}_2(#1)} % second mhz index
\newcommand{\gz}[2]{\mathcal{M}_1^{#1}(#2)} % first general mz index
\newcommand{\gzz}[2]{\mathcal{M}_2^{#1}(#2)} % second general mz index
\newcommand{\NK}[1]{\mathcal NK(#1)} % Narumi-Katayama index 
\newcommand{\pah}{Polycyclic Aromatic Hydrocarbons}
\newcommand{\PAH}{\ce{PAH_n}}
\newcommand{\bz}{Benzenoid}
\newcommand{\B}{$B_{m,n}$}
\newcommand{\defn}[1]{{\it #1}}
\newtheorem{theorem}{Theorem}[section]
\newtheorem{thm}[theorem]{Theorem}
\newtheorem{cor}[theorem]{Corollary}
\newtheorem{lemma}[theorem]{Lemma}
\theoremstyle{definition}
\title[Generalized Multiplicative Indices]
	{Generalized Multiplicative Indices of Polycyclic Aromatic Hydrocarbons and Benzeniod Systems}
\author[Kulli]{V.R. Kulli}
\address{V.R. Kulli, Department of Mathematics, Gulbarga University, Gullbarga 585106, India}
\email{vrkulli@gmail.com}
\author[Stone]{Branden Stone}
\address{Branden Stone, Department of Mathematics and Computer Science, Adelphi University, 1 South Avenue, Garden City, NY 11530-0701}
\email{bstone@adelphi.edu}
\author[Wang]{Shaohui Wang}
\address{Shaohui Wang, Department of Mathematics and Computer Science, Adelphi University, 1 South Avenue, Garden City, NY 11530-0701}
\email{shaohuiwang@yahoo.com}
\author[Wei]{Bing Wei}
\address{Bing Wei,  Department of Mathematics, University of Mississippi, University, MS 38655}
\email{bwei@olemiss.edu}
\subjclass[2010]{Primary: 05C05; Secondary: 05C07, 05C90}
\begin{document}

\begin{abstract}
Many types of topological indices such as degree-based topological indices, distance-based topological indices and counting related topological indices are explored during past recent years.  Among degree based topological indices, Zagreb indices are the oldest one and studied well. In the paper, we define a generalized multiplicative version of these indices and compute exact formulas for Polycyclic Aromatic Hydrocarbons and Jagged-Rectangle Benzenoid Systems. 
\end{abstract}

\maketitle

%------------------------------------------------------------------------
%------------------------------------------------------------------------
\section{Introduction}

A molecular graph is a finite simple graph, representing the carbon-atom skeleton of an organic molecule of a hydrocarbon. The vertices of a molecular graph represent the carbon atoms and its undirected edges the carbon-carbon bounds. Throughout this paper $G = (V, E)$ is a connected molecular graph with vertex set $V = \V G$ and edge set $E = \E G$.   The degree $\dg v$ of a vertex $v$ is the number of vertices adjacent to $v$.\footnote{For more on this notation and terminology, the readers are referred to \cite{bondy}.}

Studying molecular graphs is a constant focus in chemical graph theory; an effort to better understand molecular structure. For instance, in 1947 H. Wiener \cite{wiener} introduced a topological index of a molecule, now called the \defn{Wiener index}. This index was originally defined as the sum of path distances between any two carbons in a saturated acyclic hydrocarbon. Since it's inception this index has been generalized to a variety of structures as well as used in Quantitative structure-activity relationship (QSAR) regression models \cite{fowler,matamala,randic,yang}. 

Some indices related to Wiener's work are the \defn{first and second multiplicative Zagreb indices} \cite{gutman}, respectively
\[
	\z G = \prod_{u \in \V G} \dg u^2 \text{ and } \zz G = \prod_{uv \in \E G} \dg u \dg v,
\]
and the \defn{Narumi-Katayama index} \cite{NK}
\[
	\NK G = \prod_{v \in \V G} \dg v.
\]
Like the Wiener index, these types of indices are the focus of considerable research in computational chemistry \cite{4,10, two,eight}. 
For example, in 2011 I.~Gutman \cite{4} characterized the multiplicative Zagreb indices for trees and determined the unique trees that obtained maximum and minimum values for $\z G$ and $\zz G$, respectively. S. Wang and the last author \cite{eight} then extended Gutman's result to the following index for $k$-trees,
\[
	\wang s G = \prod_{u \in \V G} \dg u^s. 
\]
Notice that $s=1,2$ is the Narumi-Katayama and Zagreb index, respectively. 

Based on the successful consideration of multiplicative Zagreb indices, M. Eliasi et al \cite{three} continued to define a new multiplicative version of the first Zagreb index as
\[
	\nz G = \prod_{uv \in \E G} \left( \dg u + \dg v \right).
\]
Furthering the concept of indexing with the edge set, the first author introduced the \defn{first and second hyper-Zagreb indices} of a graph \cite{four}. They are defined as
\[
	\hz G = \prod_{uv \in \E G} \left( \dg u + \dg v \right)^2 \text{ and } \hzz G = \prod_{uv \in \E G} \left( \dg u \dg v \right)^2.
\]
In this paper, we continue this generalization and define the \defn{general first and second \mz\ indices} of a graph $G$ as
\[
	\gz a G = \prod_{uv \in \E G} \left( \dg u + \dg v \right)^a \text{ and } \gzz a G = \prod_{uv \in \E G} \left( \dg u \dg v \right)^a.
\]
In Section \ref{pah} we determine the \mz\ and the general \mz\ indices for \pah\ (\PAH). Section \ref{bs} contains similar results for a jagged-rectangle \bz\ system (\B). %\ as defined in \cite{nine}. 

%\paragraph{Acknowledgments}

%The authors would like to thank Shaohui Wang for his guidance during the writing of this paper. 

%------------------------------------------------------------------------
%------------------------------------------------------------------------
\section{Results for Polycyclic Aromatic Hydrocarbons}\label{pah}

In this section, we focus on the molecular graph structure of the family of \pah, denoted \PAH.
These graphs of hydrocarbon molecules are defined recursively as follows. The 6-cycle with leaves at each vertex is \ce{PAH1} (\ce{C6H6}, benzene). The next element in the family, \ce{PAH2}, is given by deleting the leaves of \ce{PAH1} and gluing 6-cycles to each exterior edge, then adding leaves to each exterior vertex. We give the first three members of the family \PAH\ in Figure \ref{fig:pah}.

\begin{figure}
\begin{tabular}{ccc}
  
% PAH_1
\begin{tikzpicture}[x=7.5mm,y=4.34mm]

\foreach \a/\b in {0/0}{
    
    % Leafs top and bottom
    \foreach \t in {71.5, 108.5, 251.5, 288.5} {
        \def\c{1.05};
        \def\d{2};
        \draw[color=black] 
            ({\a+\c*cos(\t)},{\b+\c*sin(\t)}) -- ({\a+\c*cos(\t)*\d},{\b+\c*sin(\t)*\d});
    }

    % Leafs left and right
    \foreach \t in {0,180} {
        \def\c{.65};
        \def\d{2};
        \draw[color=black] 
            ({\a+\c*cos(\t)},{\b+\c*sin(\t)}) -- ({\a+\c*cos(\t)*\d},{\b+\c*sin(\t)*\d});
        }

    % Hexagons
    \node[box,color=red,thick] at (\a,\b) {}; 

    }
\end{tikzpicture}

&

% PAH_2
\begin{tikzpicture}[x=7.5mm,y=4.34mm]

\foreach \a/\b in {0/2,0/4,1/1,1/3,1/5,2/2,2/4} {
    
    % Hexagons
    \node[box] at (\a,\b) {}; 
    
    % Leafs top and bottom
    \foreach \t in {71.5, 108.5, 251.5, 288.5} {
        \def\c{1.05};
        \def\d{2};
        \draw[color=black] 
            ({\a+\c*cos(\t)},{\b+\c*sin(\t)}) -- ({\a+\c*cos(\t)*\d},{\b+\c*sin(\t)*\d});
    }

    % Leafs left and right
    \foreach \t in {0,180} {
        \def\c{.65};
        \def\d{2};
        \draw[color=black] 
            ({\a+\c*cos(\t)},{\b+\c*sin(\t)}) -- ({\a+\c*cos(\t)*\d},{\b+\c*sin(\t)*\d});
        }

    }

\node[box,color=red,thick] at (1,3) {};

\end{tikzpicture}

&

% PAH_3
\begin{tikzpicture}[x=7.5mm,y=4.34mm]

\foreach \a/\b in {0/2,0/4,0/6,1/1,1/3,1/5,1/7,2/0,2/2,2/4,2/6,2/8,3/1,3/3,3/5,3/7,4/2,4/4,4/6} {

    % Hexagons
    \node[box] at (\a,\b) {}; 
    
    % Leafs top and bottom
    \foreach \t in {71.5, 108.5, 251.5, 288.5} {
        \def\c{1.05};
        \def\d{2};
        \draw[color=black] 
            ({\a+\c*cos(\t)},{\b+\c*sin(\t)}) -- ({\a+\c*cos(\t)*\d},{\b+\c*sin(\t)*\d});
    }

    % Leafs left and right
    \foreach \t in {0,180} {
        \def\c{.65};
        \def\d{2};
        \draw[color=black] 
            ({\a+\c*cos(\t)},{\b+\c*sin(\t)}) -- ({\a+\c*cos(\t)*\d},{\b+\c*sin(\t)*\d});
        }
    }

\foreach \a/\b in {1/3,1/5,2/2,2/4,2/6,3/3,3/5}
    \node[box,color=red,thick] at (\a,\b) {}; 

\end{tikzpicture}\\

\ce{PAH1} & \ce{PAH2} & \ce{PAH3}

\end{tabular}

	\caption{The first three elements of \PAH.}\label{fig:pah}
\end{figure}
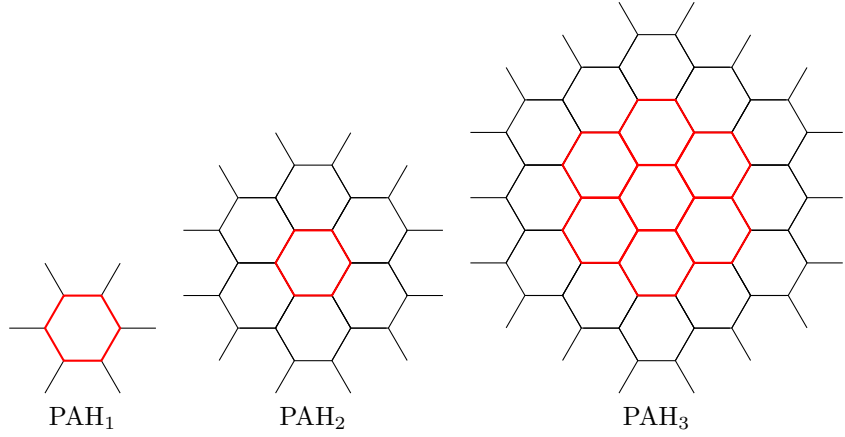

\begin{lemma}\label{lem:ref}
    Let $G = \PAH$\ be the molecular graph in the family of \pah. Then
    \begin{align*}
        |\V{G}| &= 6n^2+6n, \\
        |\E{G}| &= 9n^2+3n.
    \end{align*}
\end{lemma}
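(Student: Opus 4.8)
The plan is to argue by induction on $n$, propagating alongside the two claimed equalities a small amount of extra information about the carbon skeleton. Let $S_n$ be the subgraph of $\PAH$ obtained by deleting every pendant (hydrogen) vertex; this is a simply connected benzenoid system (see Figure \ref{fig:pah}), so each interior vertex of $S_n$ has degree $3$, every perimeter vertex has degree $2$ or $3$, and the perimeter is a single cycle. The inductive hypotheses I would carry are: $|\V{S_n}| = 6n^2$, $|\E{S_n}| = 9n^2-3n$, and the perimeter of $S_n$ has length $12n-6$. Granting these, the handshake lemma gives the number of degree-$2$ vertices of $S_n$ as $3|\V{S_n}| - 2|\E{S_n}| = 6n$; as these are exactly the ``exterior vertices'' carrying a pendant, $\PAH$ has $6n$ pendant vertices and $6n$ pendant edges, so $|\V{\PAH}| = |\V{S_n}| + 6n = 6n^2 + 6n$ and $|\E{\PAH}| = |\E{S_n}| + 6n = 9n^2 + 3n$, as claimed.

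The base case $n=1$ is immediate: $\PAH$ is a hexagon with one leaf at each vertex, so $S_1$ is a $6$-cycle, $|\V{S_1}| = |\E{S_1}| = 6$, and the perimeter has length $6 = 12\cdot 1 - 6$.

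For the inductive step I would follow the three operations that build $\mathrm{PAH}_{n+1}$ from $\PAH$. Removing the $6n$ leaves returns $S_n$. Gluing a hexagon to each of the $12n-6$ perimeter edges of $S_n$ produces $S_{n+1}$; here the geometry of the hexagonal lattice gives the essential facts: at each of the $6n$ degree-$2$ (convex) perimeter vertices the two hexagons attached to the two incident perimeter edges share a full edge, while at each of the remaining $6n-6$ degree-$3$ (reflex) perimeter vertices the two incident perimeter edges are bounded by one and the same hexagon, and no vertex lies in three of the newly attached hexagons. Hence the new band is a ring of exactly $6n$ distinct hexagons with consecutive ones sharing an edge, so by inclusion--exclusion it has $6\cdot 6n - 2\cdot 6n = 24n$ vertices and $6\cdot 6n - 6n = 30n$ edges. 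Of these, the $12n-6$ perimeter vertices and $12n-6$ perimeter edges of $S_n$ are not new, so $S_{n+1}$ acquires $24n-(12n-6)=12n+6$ new vertices and $30n-(12n-6)=18n+6$ new edges; thus $|\V{S_{n+1}}| = 6n^2 + 12n+6 = 6(n+1)^2$ and $|\E{S_{n+1}}| = 9n^2 - 3n + 18n+6 = 9(n+1)^2 - 3(n+1)$. Counting the outer edges of the band (its $30n$ edges minus the $12n-6$ old perimeter edges and the $6n$ edges shared between consecutive band hexagons) gives the perimeter of $S_{n+1}$ length $12n+6 = 12(n+1)-6$, and re-attaching leaves finishes the step.

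The part needing genuine care is the geometric bookkeeping: one must justify, from how hexagons lie in the hexagonal lattice, the edge-sharing at convex perimeter vertices, the coalescing of the two hexagons at a reflex perimeter vertex, and the cleanness of the resulting $6n$-hexagon band (no triple overlaps, outer boundary a single cycle of the stated length); these are exactly what fix the increments $12n+6$ and $18n+6$. One could instead invoke the standard benzenoid identities $|\V{S_n}| = 4h_n - n_i + 2$ and $|\E{S_n}| = 5h_n - n_i + 1$, where $h_n$ counts hexagons and $n_i$ counts interior vertices, and then verify separately that $h_n = 3n^2-3n+1$ and $n_i = 6(n-1)^2$; but proving those two counts reduces to the same layer-by-layer analysis.
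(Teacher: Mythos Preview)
Your argument is correct. Both your proof and the paper's proceed by induction on $n$, following the layer-by-layer construction of $\PAH$, so the overall strategy is the same. The paper, however, inducts directly on $\PAH$: it first establishes (by a short induction) that $\PAH$ has $6n$ leaves, and then argues that passing from $\mathrm{PAH}_{n-1}$ to $\PAH$ adds $12n$ vertices and $18n-6$ edges, which immediately yields the two formulas. You instead strip off the pendants and induct on the carbon skeleton $S_n$, carrying the perimeter length $12n-6$ as an auxiliary invariant, deducing the pendant count $6n$ from the handshake lemma, and analysing the newly attached band of $6n$ hexagons by inclusion--exclusion with explicit attention to convex versus reflex perimeter vertices. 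Your route is longer but substantially more explicit about the hexagonal-lattice geometry (precisely the ``bookkeeping'' you flag as needing care); the paper's version is terse and leaves most of that bookkeeping to the reader.
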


\begin{proof}
    We first need to show that $G$ has $6n$ leaves; we do this by induction on $n$. The result is clear for $n = 1,2$ and we assume \ce{PAH_{n-1}} has $6(n-1)$ leaves. To construct \PAH\ from \ce{PAH_{n-1}}, we attached $n-1$ hexagons; one between each pair of neighboring leaves. As six of these hexagons contribute two leaves to \PAH\ (and the rest contribute one), we see that \PAH\ has $6+6(n-1) = 6n$ leaves. 

    We are now able to find $|\V G|$ and $|\E G|$. Notice that each leaf in $G$ contributes 2 vertices. Removing these vertices from \PAH\ yields \ce{PAH_{n-1}}. Hence, by induction, 
    \[
        |\V{G}| = 2(6n)+(6(n-1)^2+6(n-1)) = 6n^2+6n. 
    \]
    Similarly, the leaves of \PAH\ contribute $3(6n)-6$ extra edges over \ce{PAH_{n-1}} (subtracting 6 accounts for the six hexagons contributing two leaves). Once again by induction,
    \[
        |\E{G}| = (3(6n)-6) + (9(n-1)^2 + 3(n-1)) = 9n^2+3n.
    \]
\end{proof}

\begin{comment}
    Through basic counting methods, we see that these structures have $6n$ leaves. By way of induction, the graphs $G =\PAH$ have
    \[
        |\V{G}| = 2(6n)+(6(n-1)^2+6(n-1)) = 6n^2+6n
    \] 
    vertices.  Similarly, the number of edges is given by 
    \[
    	|\E{G}| = (3(6n)-6) + (9(n-1)^2 + 3(n-1)) = 9n^2+3n.
    \]
\end{comment}

We are now ready to compute the general indices of the molecular graph \PAH.

\begin{thm}\label{thm:gmz}
    Let $G = \PAH$\ be the molecular graph in the family of \pah. Then
	\begin{enumerate}
		\item $\gz a G = 4^{6an}\times 6^{(9n^2-3n)a}$;
		\item $\gzz a G = 3^{18an^2}$;
		\item $\wang s G = 3^{6sn^2}.$
	\end{enumerate}
\end{thm}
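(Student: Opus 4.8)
The plan is to reduce all three identities to two facts about $G = \PAH$: its degree sequence and the induced partition of $\E G$ by endpoint degrees. Lemma \ref{lem:ref} already supplies that $G$ has exactly $6n$ leaves, with $|\V G| = 6n^2+6n$ and $|\E G| = 9n^2+3n$. The remaining structural input is that every non-leaf vertex of $G$ has degree exactly $3$. This is the one place where genuine work is needed: one checks, from the recursive construction, that each carbon vertex is either interior to the benzenoid skeleton (lying on three hexagon edges, hence degree $3$) or on its boundary (lying on two hexagon edges and carrying one leaf, hence degree $2+1 = 3$). A clean way to organize this is the same induction used to prove Lemma \ref{lem:ref}: gluing the new ring of hexagons to $\ce{PAH_{n-1}}$ upgrades every former boundary vertex from degree $2$ to degree $3$, while every newly created boundary vertex again receives a leaf. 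Granting this, $G$ has $6n$ vertices of degree $1$ and $6n^2$ vertices of degree $3$.

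Part (3) is then immediate: $\wang s G = \prod_{u \in \V G} \dg u^s = 1^{6ns}\cdot 3^{6n^2 s} = 3^{6sn^2}$.

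For parts (1) and (2) I would split $\E G$ into two classes according to the degrees of the endpoints. A \emph{leaf edge} joins a degree-$1$ vertex to a degree-$3$ vertex, and there are exactly $6n$ of them, one per leaf. Every other edge joins two degree-$3$ vertices; there are $|\E G| - 6n = 9n^2 - 3n$ of these. On a leaf edge one has $\dg u + \dg v = 4$ and $\dg u\dg v = 3$, while on a carbon--carbon edge $\dg u + \dg v = 6$ and $\dg u \dg v = 9$. Substituting into the definitions of $\gz a G$ and $\gzz a G$ gives
\[
    \gz a G = 4^{6an}\times 6^{(9n^2-3n)a}, \qquad
    \gzz a G = 3^{6an}\times 9^{(9n^2-3n)a} = 3^{6an + (18n^2-6n)a} = 3^{18an^2},
\]
as claimed.

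I expect the only real obstacle to be the structural claim that $G$ has no non-leaf vertices of degree other than $3$ — equivalently, that the benzenoid carbon skeleton has only degree-$2$ and degree-$3$ vertices and that the hydrogen leaves are attached to precisely the degree-$2$ carbons. Once that is in hand, everything else is bookkeeping with the vertex and edge counts from Lemma \ref{lem:ref}.
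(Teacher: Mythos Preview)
Your proposal is correct and follows essentially the same approach as the paper: partition $\V G$ into the $6n$ leaves and the $6n^2$ degree-$3$ vertices, partition $\E G$ into the $6n$ leaf edges and the $9n^2-3n$ edges joining two degree-$3$ vertices, and then factor each product accordingly. If anything, you give more justification than the paper does for the structural claim that every non-leaf vertex has degree exactly $3$.
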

\begin{proof}
	According to Lemma \ref{lem:ref}, $G$ has $6n^2-6n$ vertices and $6n$ of those vertices are on leaves. With this information, we are able to partition of the vertex set of $G$ into two sets, one containing the vertices on leaves, and the other containing the rest of the vertices in the graph. We define them as
		\begin{align*}
			V_1 &= \{ v \in \V G \vl \dg v = 1 \}, \ |V_1| = 6n;\\	
			V_3 &= \{ v \in \V G \vl \dg v = 3 \}, \ |V_3| = 6n^2.
		\end{align*}
	Likewise, we obtain two partitions of the $9n^2+3n$ edges of $G$ as
		\begin{align*}
			E_1 &= \{ uv \in \E G \vl \dg u = 1, \dg v = 3\}, \ |E_1|=6n;\\
			E_3 &= \{ uv \in \E G \vl \dg u = \dg v = 3\}, \ |E_2|=9n^2-3n.
		\end{align*}
	Thus we are able to factor the products along these partitions. In particular, 
		\begin{align*}
			\gz a G &= \prod_{uv \in \E G} (\dg u + \dg v)^a \\
			      &= \prod_{uv \in E_1} (\dg u + \dg v)^a \times \prod_{uv\in E_3} (\dg u + \dg v)^a\\
			      &= [(1+3)^a]^{6n} \times [(3+3)^a]^{9n^2-3n}\\
			      &= 4^{6an} \times (6)^{(9n^2-3n)a}.		
		\end{align*}
	Similarly we have
		\begin{align*}
			\gzz a G &= \prod_{uv \in \E G} (\dg u\dg v)^a \\
			      &= \prod_{uv \in E_1} (\dg u\dg v)^a \times \prod_{uv\in E_3} (\dg u \dg v)^a\\
			      &= [(1\times 3)^a]^{6n} \times [(3\times 3)^a]^{9n^2-3n}\\
			      &= 3^{18an^2}.
		\end{align*}

	To see the last result we use the partitions of the vertex set to obtain
		\begin{align*}
			\wang s G &= \prod_{u \in \V G} \dg u^s  \\
			      &= \prod_{u \in V_1} \dg u^s \times \prod_{u\in V_3} \dg u^s\\
			      &= (1^s)^{6n} \times (3^s)^{6n^2}\\
			      &= 3^{6sn^2}.
		\end{align*}
\end{proof}

With this result we are able to calculate the remaining indices. 

\begin{cor}\label{cor:pah}
	Let $G = \PAH$\ be the molecular graph in the family of \pah. Then
	\begin{enumerate}
		\item $\z G = 3^{12n^2}$;
		\item $\zz G = 3^{18n^2}$;
		\item $\NK G = 3^{6n^2}$;
		\item $\nz G = 4^{6n}\times 6^{9n^2-3n}$;
		\item $\hz G = 4^{12n}\times 6^{18n^2-6n}$;
		\item $\hzz G = 3^{36n^2}$.
	\end{enumerate}
\end{cor}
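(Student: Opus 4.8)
The plan is to observe that every index listed in the corollary is obtained from one of the three general indices of Theorem \ref{thm:gmz} by specializing the parameter, so that no new combinatorics is needed. The first step is to record the dictionary between the classical indices and the general ones, which is immediate by comparing the defining products from the introduction: $\z G = \wang 2 G$ and $\NK G = \wang 1 G$; $\zz G = \gzz 1 G$ and $\hzz G = \gzz 2 G$; and $\nz G = \gz 1 G$ and $\hz G = \gz 2 G$.

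The second step is pure substitution into the formulas of Theorem \ref{thm:gmz}. From part (3), taking $s=2$ yields $\z G = 3^{12n^2}$ and $s=1$ yields $\NK G = 3^{6n^2}$. From part (2), taking $a=1$ yields $\zz G = 3^{18n^2}$ and $a=2$ yields $\hzz G = 3^{36n^2}$. From part (1), taking $a=1$ yields $\nz G = 4^{6n}\times 6^{9n^2-3n}$ and $a=2$ yields $\hz G = 4^{12n}\times 6^{18n^2-6n}$, using that $(9n^2-3n)\cdot 2 = 18n^2-6n$.

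There is no genuine obstacle here: all the content is carried by Theorem \ref{thm:gmz} and the degree partition of \PAH\ established in its proof. The only point requiring care is transcribing the exponent arithmetic correctly when the parameter is doubled. As an alternative one could re-derive each index directly from the vertex partition $V_1,V_3$ and edge partition $E_1,E_3$ of \PAH, but this would merely repeat the computation in the proof of Theorem \ref{thm:gmz} verbatim, so the specialization route is preferable.
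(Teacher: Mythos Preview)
Your proposal is correct and follows exactly the same approach as the paper: identify each classical index as a specialization of one of the three general indices in Theorem~\ref{thm:gmz} (via $\z G=\wang 2 G$, $\NK G=\wang 1 G$, $\zz G=\gzz 1 G$, $\hzz G=\gzz 2 G$, $\nz G=\gz 1 G$, $\hz G=\gz 2 G$) and then substitute the parameter values. The paper's own proof does nothing more than list these six identifications with the resulting formulas.
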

\begin{proof}
	Each of the above indices are special cases of the general indices in Theorem~\ref{thm:gmz}. In particular we have, 
	\begin{align*}
		\z G &= \wang 2 G = 3^{12n^2}; \\
		\zz G &= \gzz 1 G = 3^{18n^2}; \\
		\NK G &= \wang 1 G = 3^{6n^2}; \\
		\nz G &= \gz 1 G = 4^{6n}\times 6^{9n^2-3n}; \\
		\hz G &= \gz 2 G = 4^{12n}\times 6^{18n^2-6n}; \\
		\hzz G &= \gzz 2 G = 3^{36n^2}.
	\end{align*}
\end{proof}

%------------------------------------------------------------------------
%------------------------------------------------------------------------
\section{Results for Benzenoid Systems}\label{bs}

We now focus on the molecular graph structure of a jagged-rectangle \bz\ system, denoted \B\ for all $m,n \in \mathbb N$. As can be seen in Figure~\ref{fig:bz} the rectangles \B\ are constructed be gluing $n+1$ chains of $m-1$ hexagons (or \ce{C6}) to $n$ chains of $m$ hexagons, alternating by starting with a $m-1$-chain of hexagons. This family of graphs was defined in \cite{nine}. In this section we will calculate the generalized multiplicative indices for these types of molecular graphs. 

\tikzset{
  box/.style={
    regular polygon,
    regular polygon sides=6,
    minimum size=8.7mm,
    inner sep=0mm,
    outer sep=0mm,
    rotate=90,
  draw
  }
}

\begin{figure}
\begin{tabular}{ccc}

% B_{3,1}
\begin{tikzpicture}[x=7.5mm,y=4.34mm]

    \foreach \a/\b in {1/1,2/1,.5/2.5,1.5/2.5,2.5/2.5,1/4,2/4} {
        
        % Hexagons
        \node[box] at (\a,\b) {}; 

        }

    \node at (1,4) {\tiny 1};
    \node at (2,4) {\tiny 2};

    \node at (.5,2.5) {\tiny 1};        

\end{tikzpicture}

& 

% B_{5,2}
\begin{tikzpicture}[x=7.5mm,y=4.34mm]

    \foreach \a/\b in {1/1,2/1,3/1,4/1,%
                        .5/2.5,1.5/2.5,2.5/2.5,3.5/2.5,4.5/2.5,%
                        1/4,2/4,3/4,4/4,%
                        .5/5.5,1.5/5.5,2.5/5.5,3.5/5.5,4.5/5.5,%
                        1/7,2/7,3/7,4/7,%
                        } {
        
        % Hexagons
        \node[box] at (\a,\b) {}; 
        }

    \node at (1,7) {\tiny 1};
    \node at (2,7) {\tiny 2};
    \node at (3,7) {\tiny 3};    
    \node at (4,7) {\tiny 4};

    \node at (.5,2.5) {\tiny 2};
    \node at (.5,5.5) {\tiny 1};        
\end{tikzpicture}

& 

% B_{5,2}
\begin{tikzpicture}[x=7.5mm,y=4.34mm]

    \foreach \a/\b in {1/1,2/1,4/1,%
                        .5/2.5,1.5/2.5,2.5/2.5,4.5/2.5,%
                        %1/4,2/4,3/4,4/4,%
                        .5/5.5,1.5/5.5,2.5/5.5,4.5/5.5,%
                        1/7,2/7,4/7,%
                        } {
        
        % Hexagons
        \node[box] at (\a,\b) {}; 

        }

    \node at (3,1) {$\cdots$};
    \node at (3.5,2.5) {$\cdots$};
    \node at (1,4.2) {$\vdots$};
    \node at (2,4.2) {$\vdots$};
    \node at (3.5,5.5) {$\cdots$};
    \node at (3,7) {$\cdots$};

    \node at (1,7) {\tiny 1};
    \node at (2,7) {\tiny 2};
    \node at (4,7) {\tiny $m-1$};

    \node at (.5,2.5) {\tiny $n$};
    \node at (.5,5.5) {\tiny 1};
\end{tikzpicture}\\

$B_{3,1}$ & $B_{5,2}$ & $B_{m,n}$

\end{tabular}
\caption{The molecular graphs of a jagged-rectangle \bz\ system.}\label{fig:bz}
\end{figure}
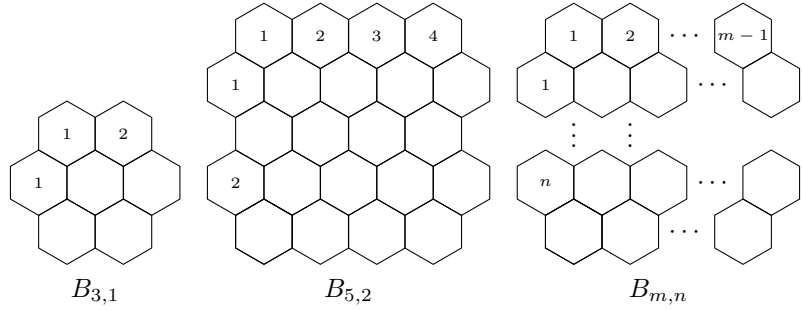

As to the general indices for this system, we have the following result.

\begin{thm}\label{thm:gbz}
	Let $G =$\B\ be a molecular graph of a jagged-rectangle \bz\ system. Then
	\begin{enumerate}
		\item $\gz a G = 4^{a(2n+4)}\times 5^{a(4m+4n-4)}\times 6^{a(6mn+m-5n-4)}$;
		\item $\gzz a G = 4^{a(2n+4)}\times 6^{a(4m+4n-4)}\times 9^{a(6mn+m-5n-4)}$;
		\item $\wang s G = 2^{(2m+4n+2)s} \times 3^{(4mn+2m-2n-4)s}$
	\end{enumerate}
\end{thm}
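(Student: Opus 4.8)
The plan is to follow the template of the proof of Theorem~\ref{thm:gmz}. Since every vertex of a benzenoid system has degree $2$ or $3$, I would partition $\V{G}$ by degree into
\[
V_2 = \{\, v \in \V{G} \vl \dg{v} = 2 \,\}, \qquad V_3 = \{\, v \in \V{G} \vl \dg{v} = 3 \,\},
\]
and partition $\E{G}$ by the degrees of the endpoints into $E_{22}$, $E_{23}$, $E_{33}$, where $E_{ij}$ collects the edges $uv$ with $\{\dg{u},\dg{v}\} = \{i,j\}$. Every product appearing in the statement then factors along the appropriate partition, so the whole theorem reduces to computing the five cardinalities $|V_2|,|V_3|,|E_{22}|,|E_{23}|,|E_{33}|$ as functions of $m$ and $n$.

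First I would record the gross structure of \B: it is built from $2n+1$ horizontal rows of hexagons, alternating between $m-1$ and $m$ hexagons and starting and ending with an $(m-1)$-row, so it has
\[
h = (n+1)(m-1) + nm = 2mn + m - n - 1
\]
hexagons. From this I would obtain $|\V{G}| = 4mn+4m+2n-2$ and $|\E{G}| = 6mn+5m+n-4$, either by induction on $m$ and $n$ (extending every row by one hexagon, or inserting a new pair of rows, while tracking the vertices and edges added at each step, exactly as in the proof of Lemma~\ref{lem:ref}) or by invoking the standard benzenoid identities $|\V{G}| = 4h+2-n_i$ and $|\E{G}| = |\V{G}| + h - 1$ after counting the number $n_i$ of internal vertices.

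Next I would determine $|V_2|$ and $|E_{22}|$ by inspecting the perimeter of \B: every degree-$2$ vertex lies on the boundary cycle, occurring at the outer corners of the jagged top and bottom and along the two vertical sides, and the edges of $E_{22}$ (pairs of consecutive degree-$2$ vertices) occur only in those same places. A careful count of these gives $|V_2| = 2m+4n+2$ and $|E_{22}| = 2n+4$. The remaining three cardinalities then come essentially for free: $|V_3| = |\V{G}| - |V_2| = 4mn+2m-2n-4$; the incidence identity $2|E_{22}| + |E_{23}| = 2|V_2|$ (each degree-$2$ vertex lies on exactly two edges) gives $|E_{23}| = 4m+4n-4$; and $|E_{33}| = |\E{G}| - |E_{22}| - |E_{23}| = 6mn+m-5n-4$, which can be cross-checked against $3|V_3| = 2|E_{33}| + |E_{23}|$.

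Finally I would assemble the three products. Factoring along the edge partition,
\[
\gz a G = (2+2)^{a|E_{22}|}(2+3)^{a|E_{23}|}(3+3)^{a|E_{33}|}, \qquad \gzz a G = (2\cdot 2)^{a|E_{22}|}(2\cdot 3)^{a|E_{23}|}(3\cdot 3)^{a|E_{33}|},
\]
and along the vertex partition $\wang s G = (2^s)^{|V_2|}(3^s)^{|V_3|}$; substituting the five cardinalities proved above yields the three displayed formulas. I expect the main obstacle to be the third step, namely pinning down $|V_2|$ and $|E_{22}|$ rigorously, since this is exactly where the "jagged" shape of the boundary must be handled with care; once those two counts and the global totals $|\V{G}|$, $|\E{G}|$ are in hand, the other three cardinalities are forced by handshake-type identities and the final factorization is routine bookkeeping.
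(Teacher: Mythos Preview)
Your proposal is correct and follows essentially the same approach as the paper: partition $\V{G}$ and $\E{G}$ by degree, determine the five cardinalities, and factor each product along the relevant partition. The only noteworthy difference is bookkeeping: the paper gets $|\E{G}|$ via the handshake lemma after stating $|V_2|,|V_3|$ and obtains $|E_{23}|$ by a direct case split on degree-$2$ vertices, whereas your incidence identity $2|E_{22}|+|E_{23}|=2|V_2|$ is a slightly cleaner way to reach the same number.
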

\begin{proof}
	We first calculate the number of vertices and edges of $G$. To do this notice that the number of vertices in the top row of $m-1$ hexagons (oriented according to Figure~\ref{fig:bz}) is $4m-2$. As there are $n+1$ of rows containing $m-1$ hexagons in the graph, we have counted $(4m-2)(n+1) = 4mn-2n+4m-2$ vertices so far. The only remaining vertices are on the left and right ends of the $n$ rows containing $m$ hexagons; there are $4n$ of these. Hence we get
	\[
		|\V G| = (4mn-2n+4m-2) + (4n) = 4mn+4m+2n-2.
	\]
	To find the number of edges, we partition $\V G$ into two sets, vertices of degree 2 and 3 respectively, 
	\begin{align*}
		V_2 &= \{ v \in \V G \vl \dg v = 2 \}, \ |V_2| = 2m+4n+2;\\
		V_3 &= \{ v \in \V G \vl \dg v = 3 \}, \ |V_3| = 4mn+2m-2n-4.
	\end{align*}
	As the total degree of the graph is equal to twice the number of edges, we know that 
	\[
		|\E G| = \frac 1 2 [2(2m+4n+2)+3(4mn+2m-2n-4)] = 6mn+5m+n-4.
	\]

	Similar to the proof of Theorem~\ref{thm:gmz}, we will calculate the indices by factoring along partitions of the vertex and edge sets. To see the last result we use the partitions of the vertex set to obtain
		\begin{align*}
			\wang s G &= \prod_{u \in \V G} \dg u^s  \\
			      &= \prod_{u \in V_2} \dg u^s \times \prod_{u\in V_3} \dg u^s\\
			      &= (2^s)^{2m+4n+2} \times (3^s)^{4mn+2m-2n-4}\\
			      &= 2^{(2m+4n+2)s} \times 3^{(4mn+2m-2n-4)s}.
		\end{align*}

	For the remaining results, we create three partitions of the edge set of the molecular graph $G$.
	\begin{align*}
		E_2 &= \{ uv \in \E G \vl \dg u = \dg v = 2\}, \ |E_2|=2n+4;\\
		E_{2,3} &= \{ uv \in \E G \vl \dg u = 3, \dg v = 2\}, \ |E_{2,3}|=4m+4n-4;\\
		E_3 &= \{ uv \in \E G \vl \dg u = \dg v = 3\}, \ |E_3|=6mn+m-5n-4.
	\end{align*}
	It is not hard to see that $|E_2|=2n+4$. To see the number of elements in $E_{2,3}$, notice that there are $4n+8$ vertices  of degree 2 with a unique adjacent vertex of degree 3. Further, there are $2m - 6$ vertices with two distinct adjacent vertices of degree 3. Hence $|E_{2,3}|= 4n+8 + 2(2m-6) = 4m+4n-4$. Subtracting these values from $|\E G|$ yields $|E_3|$. 

	Using this edge partition, we are able to calculate $\gz a G$ as 
	\begin{align*}
		\gz a G &= \prod_{uv \in \E G} (\dg u + \dg v)^a \\
		      &= \prod_{uv \in E_2} (\dg u + \dg v)^a \times \prod_{uv\in E_{2,3}} (\dg u + \dg v)^a \times \prod_{uv\in E_3} (\dg u + \dg v)^a\\
		      &= [(2+2)^a]^{2n+4} \times [(3+2)^a]^{4m+4n-4}\times [(3+3)^a]^{6mn+m-5n-4}\\
		      &= 4^{a(2n+4)}\times 5^{a(4m+4n-4)}\times 6^{a(6mn+m-5n-4)}.
	\end{align*}
	To see the second result, we have
	\begin{align*}
		\gzz a G &= \prod_{uv \in \E G} (\dg u\dg v)^a \\
		      &= \prod_{uv \in E_2} (\dg u\dg v)^a \times \prod_{uv\in E_{2,3}} (\dg u \dg v)^a\times \prod_{uv\in E_3} (\dg u \dg v)^a\\
		      &= 4^{a(2n+4)}\times 6^{a(4m+4n-4)}\times 9^{a(6mn+m-5n-4)}.
	\end{align*}
\end{proof}

As an immediate corollary all other indices in this paper are obtained. 

\begin{cor}\label{cor:bz}
	Let $G =$\B\ be a molecular graph of a jagged-rectangle \bz\ system. Then
	\begin{enumerate}
		\item $\z G = 4^{2m+4n+2}\times 9^{4mn+2m-2n-4}$;
		\item $\zz G = 4^{2n+4}\times 6^{4m+4n-4}\times 9^{6mn+m-5n-4}$;
		\item $\NK G = 2^{2m+4n+2}\times 3^{4mn+2m-2n-4}$;
		\item $\nz G = 4^{2n+4}\times 5^{4m+4n-4}\times 6^{6mn+m-5n-4}$;
		\item $\hz G = 4^{2(2n+4)}\times 5^{2(4m+4n-4)}\times 6^{2(6mn+m-5n-4)}$;
		\item $\hzz G = 4^{2(2n+4)}\times 6^{2(4m+4n-4)}\times 9^{2(6mn+m-5n-4)}$.
	\end{enumerate}
\end{cor}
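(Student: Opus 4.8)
The plan is to observe that every index appearing in Corollary~\ref{cor:bz} is a specialization of one of the three general indices already computed in Theorem~\ref{thm:gbz}, so the corollary follows by substituting the appropriate parameter ($a=1,2$ or $s=1,2$) and simplifying the resulting powers. Recalling the definitions from the introduction, the six identifications to use are: $\z G = \wang 2 G$ and $\NK G = \wang 1 G$ for the two vertex-based indices; $\zz G = \gzz 1 G$ and $\hzz G = \gzz 2 G$; and $\nz G = \gz 1 G$ and $\hz G = \gz 2 G$. Since the genuine content here---the partition of $\V G$ into $V_2,V_3$ and of $\E G$ into $E_2,E_{2,3},E_3$, together with their cardinalities---is already established inside the proof of Theorem~\ref{thm:gbz}, nothing new needs to be proved; I would simply list these identifications and read off the values.

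First, for the vertex-based indices, I would substitute $s=1$ and $s=2$ into $\wang s G = 2^{(2m+4n+2)s}\times 3^{(4mn+2m-2n-4)s}$. Taking $s=1$ gives $\NK G = 2^{2m+4n+2}\times 3^{4mn+2m-2n-4}$ directly. Taking $s=2$ gives $2^{2(2m+4n+2)}\times 3^{2(4mn+2m-2n-4)}$, which I would rewrite as $4^{2m+4n+2}\times 9^{4mn+2m-2n-4}$ using $2^2=4$ and $3^2=9$; this is $\z G$.

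Next, for the four edge-based indices, I would substitute $a=1$ and $a=2$ into the two formulas $\gz a G = 4^{a(2n+4)}\times 5^{a(4m+4n-4)}\times 6^{a(6mn+m-5n-4)}$ and $\gzz a G = 4^{a(2n+4)}\times 6^{a(4m+4n-4)}\times 9^{a(6mn+m-5n-4)}$ from Theorem~\ref{thm:gbz}. With $a=1$ these specialize to exactly the stated expressions for $\nz G$ and $\zz G$; with $a=2$ they specialize to the stated expressions for $\hz G$ and $\hzz G$, where no further simplification is needed since the corollary already records the exponents with the explicit factor of $2$.

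There is essentially no obstacle beyond bookkeeping: the proof is a routine substitution once Theorem~\ref{thm:gbz} is in hand. The only point requiring a little care is matching the two equivalent ways of writing the powers of $2$ and $3$ that occur when specializing $\wang 2 G$ to $\z G$, namely $2^{2k}=4^{k}$ and $3^{2\ell}=9^{\ell}$.
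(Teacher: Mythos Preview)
Your proposal is correct and matches the paper's own proof essentially line for line: the paper likewise records the six identifications $\z G=\wang 2 G$, $\zz G=\gzz 1 G$, $\NK G=\wang 1 G$, $\nz G=\gz 1 G$, $\hz G=\gz 2 G$, $\hzz G=\gzz 2 G$ and reads off the values from Theorem~\ref{thm:gbz}. The only cosmetic difference is that you explicitly note the rewriting $2^{2k}=4^k$, $3^{2\ell}=9^\ell$ when specializing $\wang 2 G$, which the paper leaves implicit.
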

\begin{proof}
	Each of the above indices are special cases of the general indices in Theorem~\ref{thm:gbz}. In particular we have, 
	\begin{align*}
		 \z G &= \wang 2 G = 4^{2m+4n+2}\times 9^{4mn+2m-2n-4}; \\
		\zz G &= \gzz 1 G = 4^{2n+4}\times 6^{4m+4n-4}\times 9^{6mn+m-5n-4};\\
		\NK G &= \wang 1 G = 2^{2m+4n+2}\times 3^{4mn+2m-2n-4};\\
		\nz G &= \gz 1 G = 4^{2n+4}\times 5^{4m+4n-4}\times 6^{6mn+m-5n-4};\\
		\hz G &= \gz 2 G = 4^{2(2n+4)}\times 5^{2(4m+4n-4)}\times 6^{2(6mn+m-5n-4)};\\
		\hzz G &= \gzz 2 G = 4^{2(2n+4)}\times 6^{2(4m+4n-4)}\times 9^{2(6mn+m-5n-4)}.		
	\end{align*}
\end{proof}

%------------------------------------------------------------------------
%------------------------------------------------------------------------
% References
%\newpage

%\bibliography{bib}{}
%\bibliographystyle{alpha}

\end{document}